\def\NZQ{\Bbb}               
\def\NN{{\NZQ N}}
\def\ZZ{{\NZQ Z}}
\def\frk{\frak}               
\def\Phi{{\frk n}}
\def\Phi{{\frk N}}
\def\opn#1#2{\def#1{\operatorname{#2}}} 
\opn\chara{char} \opn\length{\ell} \opn\pd{pd} \opn\rk{rk}
\opn\projdim{proj\,dim} \opn\injdim{inj\,dim} \opn\rank{rank}
\opn\depth{depth} \opn\grade{grade} \opn\height{height}
\opn\embdim{emb\,dim} \opn\codim{codim}
\opn\Tr{Tr} \opn\bigrank{big\,rank}
\opn\superheight{superheight}\opn\lcm{lcm}
\opn\trdeg{tr\,deg}
\opn\reg{reg} \opn\lreg{lreg} \opn\ini{in} \opn\lpd{lpd}
\opn\size{size} \opn\sdepth{sdepth} \opn\link{link}\opn\fdepth{fdepth}
\opn\div{div} \opn\Div{Div} \opn\cl{cl} \opn\Cl{Cl}
\opn\Spec{Spec} \opn\Supp{Supp} \opn\supp{supp} \opn\Sing{Sing}
\opn\Ass{Ass} \opn\Min{Min}
\opn\Ann{Ann} \opn\Rad{Rad} \opn\Soc{Soc}
\opn\Im{Im} \opn\Ker{Ker} \opn\Coker{Coker} \opn\Am{Am}
\opn\Hom{Hom} \opn\Tor{Tor} \opn\Ext{Ext} \opn\End{End}
\opn\Aut{Aut} \opn\id{id}
\opn\nat{nat}
\opn\pff{pf}
\opn\Pf{Pf} \opn\GL{GL} \opn\SL{SL} \opn\mod{mod} \opn\ord{ord}
\opn\Gin{Gin} \opn\Hilb{Hilb}
\opn\aff{aff} \opn\con{conv} \opn\relint{relint} \opn\st{st}
\opn\lk{lk} \opn\cn{cn} \opn\core{core} \opn\vol{vol}
\opn\link{link} \opn\star{star}
\opn\gr{gr}
\def\pot#1#2{#1[\kern-0.28ex[#2]\kern-0.28ex]}
\opn\dirlim{\underrightarrow{\lim}}
\opn\inivlim{\underleftarrow{\lim}}
\let\tensor=\otimes
\let\iso=\cong
\let\Dirsum=\bigoplus
\let\to=\rightarrow
\def\Implies{\ifmmode\Longrightarrow \else
        \unskip${}\Longrightarrow{}$\ignorespaces\fi}
\def\implies{\ifmmode\Rightarrow \else
        \unskip${}\Rightarrow{}$\ignorespaces\fi}
\def\iff{\ifmmode\Longleftrightarrow \else
        \unskip${}\Longleftrightarrow{}$\ignorespaces\fi}
\newtheorem{Theorem}{Theorem}[section]
\newtheorem{Lemma}[Theorem]{Lemma}
\newtheorem{Corollary}[Theorem]{Corollary}
\newtheorem{Proposition}[Theorem]{Proposition}
\newtheorem{Example}[Theorem]{Example}
\let\epsilon\varepsilon
\let\phi=\varphi
\let\kappa=\varkappa
\def\qed{\ifhmode\textqed\fi
      \ifmmode\ifinner\quad\qedsymbol\else\dispqed\fi\fi}
\def\textqed{\unskip\nobreak\penalty50
       \hskip2em\hbox{}\nobreak\hfil\qedsymbol
       \parfillskip=0pt \finalhyphendemerits=0}
\def\dispqed{\rlap{\qquad\qedsymbol}}
\opn\dis{dis}
\def\pnt{{\raise0.5mm\hbox{\large\bf.}}}
\opn\Lex{Lex}
\begin{document}

\title{Stanley decompositions and localization}

\author{Sumiya Nasir}

\subjclass{Primary 13P10, Secondary
13H10, 13F20, 13C14}

\thanks{The author is highly grateful to the Abdus Salam School of Mathematical Sciences,
  GC University, Lahore, Pakistan in supporting and facilitating this
  research. The author would like to thank Prof. Jurgen Herzog for introducing the idea and encouragement.}

\address{Sumiya Nasir, School of Mathematical Sciences, 68-B New Muslim Town,
    Lahore,Pakistan.}\email{Sumiya.sms@gmail.com} \maketitle

\begin{abstract}
We study the behavior of Stanley depth under the operation of localization with respect to a variable.
\end{abstract}

\section*{Introduction}

 Let $K$ be a field, $S=K[x_1,\ldots,x_n]$ be the polynomial ring in
$n$ variables over $K$ and $I\subset S$ a monomial ideal. Stanley depth  of $S/I$ is denoted by $\sdepth S/I$, see Section 2 for its definition. The Stanley depth is an important
combinatorial invariant of $S/I$ studied in \cite{HSY}, \cite{HVZ}, \cite{R},
\cite{So}. The interest in this subject arises in part from the so-called Stanley conjecture which asserts that $\sdepth S/I\geq \depth S/I$.

The purpose of this note is to study the behavior of $\sdepth S/I$  under the operation of localization with  respect to a variable.
The effect of localization of a monomial ideal with respect to a variable, say $x_n$,  is, up to a flat extension, the same as applying the $K$-algebra homomorphism
$\varphi :S\rightarrow T=K[x_1,\ldots,x_{n-1}]$ given by $x_n\mapsto 1$. This is explained in Section 1.

Many, but not all, Stanley decompositions arise as prime filtrations. In Section~2  we  show how prime filtrations behave under localization, see Proposition~\ref{sumiya}. As a consequence we show in Corollary~\ref{nasir} that pretty clean filtrations induce under localization again pretty clean filtrations. This  implies in particular that if Stanley's conjecture holds for $S/I$, then it holds for the localization as well. As an immediate consequence of Proposition~\ref{sumiya} we show that $\fdepth T/\varphi(I)\geq \fdepth(S/I)-1$, where $\fdepth$, introduced in \cite{HVZ}, is an invariant  of $S/I$ related to prime filtrations. This invariant is of interest since one always has $\fdepth S/I \leq \sdepth S/I, \depth S/I$.

The main purpose of Section 3 is to prove an inequality analogue to that for the $\fdepth$. In fact, we show in Corollary~\ref{main} that $\sdepth T/\varphi(I)\geq \sdepth(S/I)-1$. Easy examples show that the inequality is often strict. On the other hand, we also give an example for which   $\sdepth
T/\varphi(I)> \sdepth(S/I)$.

When $I=I_{\Delta}$ is the
Stanley-Reisner ideal of a simplicial complex $\Delta$ we get in
particular that $\sdepth K[\link_{\Delta}(\{n\})]\geq \sdepth
K[\Delta]-1$, where $K[\Delta]=S/I$ (see Lemma \ref{link}).

\section{Localization of monomial ideals}

Let $K$ be a field and $S=K[x_1,\ldots, x_n]$ be the polynomial ring
in $n$ variables over $K$, and let $I\subset S$ a be a monomial
ideal. Suppose that $I$ is generated by the monomials $u_1,\ldots,
u_m$ with $u_i=\prod_{j^=1}^n x_j^{a_{ij}}$. We denote, as usual,
by $S_{x_n}$ the localization of $S$ with respect to the element
$x_n$. Notice that $S_{x_n}$ has a $K$-basis consisting of all
monomials of the form
\[
x_1^{a_1}x_2^{a_2}\cdots x_{n-1}^{a_{n-1}}x_n^{a_n}\quad
\text{with}\quad a_i\in\ZZ_{\geq 0}\quad \text{and}\quad  a_n\in
\ZZ.
\]
In other words,
\[
S_{x_n}=K[x_n,x_n^{-1}][x_1,\ldots,
x_{n-1}]=K[x_n,x_n^{-1}]\tensor_K T,
\]
where $T=  K[x_1,\ldots,x_{n-1}]$.

The extension ideal $IS_{x_n}$ is the ideal in $S_{x_n}$ which is
generated by the monomials $u_i'=\prod_{j^=1}^{n-1} x_j^{a_{ij}}$,
because the last variable becomes a unit.

Let $\varphi :\; S\to T$ be the $K$-algebra homomorphism with
$x_i\mapsto x_i$ for $i=1,\ldots ,n-1$ and $x_n\mapsto 1$, then
$\varphi(u_i)=u_i'$ for all $i$ and we see that $IS_{x_n}$ is the
extension ideal of $\varphi(I)$ under the flat extension $T\to
K[x_n,x_n^{-1}]\tensor_K T=S_{x_n}$.

\section{Localization of prime filtrations}

Let $K$ be a field and $S=K[x_1,\ldots, x_n]$ be the polynomial ring
in $n$ variables over $K$. Let $I\subset S$ be a monomial ideal. A
{\em prime filtration} of $S/I$ is a chain of monomial ideals
\[
{\mathcal P}: I=I_0\subset I_1\subset I_2\subset \cdots \subset
I_r=S
\]
such that  there are isomorphisms of $\ZZ^n$-graded $S$-modules
\[
I_j/I_{j-1}\iso (S/P_j)(-a_j) \quad \text{for} \quad j=1,2,\ldots,
r,
\]
where $P_j$  is a monomial prime ideal and $a_j\in \ZZ^n$. The set
$\{P_1,\ldots,P_r\}$  is called the {\em support} of $\mathcal P$
and denoted $\Supp({\mathcal P})$.

We consider the $K$-algebra homomorphism $\varphi\: S\to
T=K[x_1,\ldots,x_{n-1}]$, introduced in the previous section,  with
$x_i\mapsto x_i$ for $i=1,\ldots, n-1$ and $x_n\mapsto 1$. We will
also consider the projection map $\pi\:  \ZZ^n\to \ZZ^{n-1}$ which
assigns to each $a=(a_1,\ldots,a_n)$ in $\ZZ^n$ the vector
$a'=\pi(a)=(a_1,\ldots,a_{n-1})$.

\begin{Proposition}
\label{sumiya} Let $I\subset S$ be a monomial ideal, and let
${\mathcal P}$ be a prime filtration of $S/I$ as above. We set
$J=\varphi(I)$ and $J_j=\varphi(I_j)$ for all $I_j$ in the prime
filtration. Then we get the filtration
\[
J=J_0\subseteq J_1\subseteq J_2\subseteq \cdots \subseteq J_r=T
\]
with
\[
J_j/J_{j-1} \iso \left\{ \begin{array}{ll}
       (T/P'_j)(-a_j'), & \;\text{if  $x_n\not\in P_j$}, \\ 0, & \;\text{if $x_n\in P_j,$}
        \end{array} \right.
\]
where $P_j'\subset T$ is the monomial prime ideal in $T$ such that
$P_j=P_j'S$.
\end{Proposition}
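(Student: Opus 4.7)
The plan is to track a single step $I_{j-1}\subset I_j$ of the filtration through $\varphi$ and identify the quotient $J_j/J_{j-1}$ explicitly. Since $I_j/I_{j-1}\cong (S/P_j)(-a_j)$ is a cyclic $\ZZ^n$-graded module generated in multidegree $a_j$, there is a monomial $u_j\in I_j$ of multidegree $a_j$ with $I_j=I_{j-1}+(u_j)$ and $(I_{j-1}:u_j)=P_j$. Applying $\varphi$ gives $J_j=J_{j-1}+(\varphi(u_j))$, so $J_j/J_{j-1}$ is a cyclic $T$-module generated by the residue class of $\varphi(u_j)$, whose $\ZZ^{n-1}$-multidegree is $\pi(a_j)=a_j'$.

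The case $x_n\in P_j$ is immediate: $x_n u_j\in I_{j-1}$, hence $\varphi(u_j)=\varphi(x_n u_j)\in J_{j-1}$, and so $J_j=J_{j-1}$.

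The substantive case is $x_n\notin P_j$. Then $P_j=P_j'S$ for a monomial prime $P_j'\subset T$, and the task reduces to showing that the annihilator of the residue class of $\varphi(u_j)$ in $J_j/J_{j-1}$ equals $P_j'$. One direction is easy: any monomial $v\in P_j'$ lies in $P_j$, so $vu_j\in I_{j-1}$ and hence $v\varphi(u_j)=\varphi(vu_j)\in J_{j-1}$. For the reverse direction, write $u_j=x_n^b w$ with $w\in T$, so $\varphi(u_j)=w$, and suppose a monomial $t\in T$ satisfies $tw\in J_{j-1}$. Since $J_{j-1}$ is the monomial ideal generated by the $\varphi$-images of the monomial generators of $I_{j-1}$, there exist a monomial generator $g=x_n^c g'$ of $I_{j-1}$ (with $g'\in T$) and a monomial $s\in T$ such that $tw=sg'$. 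Multiplying by $x_n^{b+c}$ yields
$$x_n^c t\cdot u_j=x_n^c t\cdot x_n^b w=x_n^b s\cdot x_n^c g'=x_n^b s\cdot g\in I_{j-1},$$
so $x_n^c t\in (I_{j-1}:u_j)=P_j$. Primality of $P_j$ together with $x_n\notin P_j$ forces $t\in P_j\cap T=P_j'$, as required.

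Combining the multidegree computation with this identification of the annihilator yields the $\ZZ^{n-1}$-graded isomorphism $J_j/J_{j-1}\cong (T/P_j')(-a_j')$. The main obstacle is the reverse inclusion in the annihilator calculation: one must lift a divisibility relation in $T$ back to a relation in $S$, and the extra $x_n$-powers introduced by this lift are precisely what the primality of $P_j$ and the hypothesis $x_n\notin P_j$ are needed to absorb.
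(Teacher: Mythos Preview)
Your proof is correct and follows the same overall strategy as the paper: reduce to a single filtration step, write $I_j=I_{j-1}+(u_j)$ with $(I_{j-1}:u_j)=P_j$, and identify $J_j/J_{j-1}$ as the cyclic $T$-module $T/(\varphi(I_{j-1}):_T\varphi(u_j))$ shifted by $a_j'$. The difference lies in how the colon ideal $\varphi(I_{j-1}):_T\varphi(u_j)$ is determined. The paper proves the general identity $\varphi(I:_S x^a)=\varphi(I):_T x^{a'}$ for an arbitrary monomial ideal $I$ and monomial $x^a$, via the explicit description of monomial colon ideals in terms of gcds of generators; the case split on $x_n\in P_j$ then comes simply from evaluating $\varphi(P_j)$. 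You instead dispose of the case $x_n\in P_j$ directly (via $\varphi(u_j)=\varphi(x_nu_j)\in J_{j-1}$) and, for $x_n\notin P_j$, lift a divisibility relation from $T$ back to $S$ and use primality of $P_j$ to strip off the extraneous power of $x_n$. Your route is a bit more economical for the statement at hand; the paper's route yields the reusable side lemma that $\varphi$ commutes with monomial colon ideals.
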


\begin{proof}
The statement of the proposition follows once we can show  the
following: Let $I\subset J$ be monomial ideals in $S$ such that
$J/I\iso (S/P)(-a)$ where $P$ is a monomial prime ideal and $a\in
\ZZ_{\geq 0}^n$.  Then
\[
\varphi(J)/\varphi(I) \iso \left\{ \begin{array}{ll}
       (T/P')(-a'), & \;\text{if  $x_n\not\in P$}, \\
       0, & \;\text{if $x_n\in P$},
        \end{array} \right.
\]
\noindent We have   $J/I\iso (S/P)(-a)$ if and only if  $J=(I,x^a)$
and $I :_S x^a=P$. Since
\[
\varphi(J)=\varphi(I,x^a)=(\varphi(I), x^{a'}),
\]
we see that
\begin{eqnarray}
\label{madiha} \varphi(J)/\varphi(I)\iso (\varphi(I),
x^{a'})/\varphi(I))\iso (T/(\varphi(I) :_Tx^{a'}))(-a').
\end{eqnarray}
Next we claim that $\varphi(I :_S x^a)=(\varphi(I) :_T x^{a'})$.
Suppose this is true, then we get
\[
(\varphi(I) :_T x^{a'})=\varphi(P)= \left\{ \begin{array}{ll}
       P', & \;\text{if  $x_n\not\in P$}, \\
       T, & \;\text{if $x_n\in P$},
        \end{array} \right.
\]
Hence  the desired result follows.

It remains to prove the claim: let $I=(u_1,\ldots, u_m)$ with
$u_i=x^{a_i}=\prod_{j=1}^n x_j^{a_{ij}}$. Then
\begin{eqnarray*}
I :_Sx^a&=&(x^{a_1}/\gcd(x^{a_1},x^a),\ldots, x^{a_m}/\gcd(x^{a_m},x^a))\\
&=&(\prod_{j=1}^n x_j^{a_{1j}-\min\{a_{1j},a_j\}},\cdots,
\prod_{j=1}^n x_j^{a_{mj}-\min\{a_{mj},a_j\}}).
\end{eqnarray*}
It follows that
\begin{eqnarray*}
\varphi(I :_Sx^a)&=& (\varphi(\prod_{j=1}^n x_j^{a_{1j}-\min\{a_{1j},a_j\}}),\cdots, \varphi(\prod_{j=1}^n x_j^{a_{mj}-\min\{a_{mj},a_j\}}))\\
&=&(\prod_{j=1}^{n-1} x_j^{a_{1j}-\min\{a_{1j},a_j\}},\cdots, \prod_{j=1}^{n-1} x_j^{a_{mj}-\min\{a_{mj},a_j\}})\\
&=&(x^{a_1'}/\gcd(x^{a_1'},x^{a'}),\ldots, x^{a'_m}/\gcd(x^{a_m'},x^{a'}))\\
&=&(\varphi(x^{a_1})/\gcd(\varphi(x^{a_1}),\varphi(x^a)),\ldots, \varphi(x^a)/\gcd(\varphi(x^{a_m}),\varphi(x^a)))\\
&=&\varphi(I):_Tx^{a'}.
\end{eqnarray*}
\end{proof}

Let $K$ be a field and $S=K[x_1,\ldots,x_n]$ be a polynomial ring.
Let $I\subset S$ be a monomial ideal. A prime filtration
\[
  \mathcal {P}:I=I_0\subset I_1\subset\cdots \subset I_r=S
\] of
$S/I$ such that $I_j/I_{j-1}\iso(S/P_j)(-a_j)$ is said to be {\em
clean} (see \cite{Dr}) if $\Supp(\mathcal P)=\Min(S/I)$, where
$\Min(S/I)$ denotes the set of minimal prime ideals of $I$.
Equivalently, $(\mathcal P)$ is clean, if there is no containment
between the elements in $\Supp(\mathcal P)$, see  \cite{HP}. A
monomial ideal $I$ is said to be {\em clean} if $S/I$ has a clean
filtration. The prime filtration ${\mathcal P}$ is said to be {\em
pretty clean} if for all $i<j$ the inclusion  $P_i \subset P_j$
implies
 $P_i=P_j$ (see \cite{HP}). A monomial ideal $I$ is said to be
  {\em pretty clean} if $S/I$ has a pretty clean filtration.

Let $I\subset S$ be a monomial ideal. We denote by $I^{c}\subset S$
the $K$  linear subspace of $S$ generated by all monomials which do
not belong to $I$. Then  $S=I \oplus I^c$   and $S/I \iso I^{c}$ as
$K$-linear spaces.
 If $u\in S$ is a monomial and $Z \subset\{x_1,\ldots, x_n\}$, the
 $K$-subspace $ uK[Z]$ whose basis consists of all monomials $uv$
 with $v\in K[Z] $ is called a {\em Stanley space } of dimension $|Z |$.
 A decomposition $ \mathcal D$ of $I^c$ as a finite direct sum of {\em Stanley
 spaces} is called a {\em Stanley decomposition } of $S/I$. The minimal
 dimension of a Stanley spaces in $\mathcal D$ is called
 the {\em Stanley depth}  of ${\mathcal D}$ and is denoted by $\sdepth {\mathcal D}$.
Finally we define $\sdepth S/I$ by
\[
  \sdepth S/I= \max\{\sdepth \mathcal D  :\;  \mathcal  D \quad \text{is
  a
Stanley  decomposition  of  $S/I$}\}
\]
In \cite{St} Stanley conjectures that for any monomial ideal
$I\subset S$  one has $\sdepth S/I\geq \depth S/I$. The  monomial
ideal $I$ is said to be a {\em Stanley ideal} if Stanley's
conjecture holds for $S/I$.  It is shown in \cite{HP} that a pretty
clean ideal is a Stanley ideal.

As a consequence of the previous result we have

\begin{Corollary}
\label{nasir} Let  $I\subset S$  be a monomial ideal. If $I$ is
(pretty) clean, then $\varphi(I)\subset T$ is (pretty) clean. In
particular, if $I$ is pretty clean,  then  $\varphi(I)\subset T$ is
a Stanley ideal.
\end{Corollary}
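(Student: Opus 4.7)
The strategy is to take a (pretty) clean filtration $\mathcal{P}$ of $S/I$, apply Proposition~\ref{sumiya} to obtain a chain of ideals in $T$, prune away the steps where the successive quotient vanishes, and then check that the containment structure on $\Supp(\mathcal{P})$ descends to the support of the pruned filtration.

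Concretely, given a (pretty) clean prime filtration $\mathcal{P}:I=I_0\subset\cdots\subset I_r=S$ with $I_j/I_{j-1}\iso(S/P_j)(-a_j)$, Proposition~\ref{sumiya} yields the chain $J=J_0\subseteq\cdots\subseteq J_r=T$ in which $J_j/J_{j-1}$ vanishes precisely when $x_n\in P_j$ and is isomorphic to $(T/P_j')(-a_j')$ otherwise. I would then let $j_1<\cdots<j_s$ enumerate the indices with $x_n\notin P_{j_k}$, set $j_0=0$, and pass to the strict chain $\mathcal{P}':J=J_{j_0}\subsetneq J_{j_1}\subsetneq\cdots\subsetneq J_{j_s}=T$. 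Because vanishing quotients correspond exactly to the equalities $J_j=J_{j-1}$, this chain is a bona fide prime filtration of $T/J$ with $\Supp(\mathcal{P}')=\{P_{j_k}':k=1,\ldots,s\}$.

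The key observation underpinning both cases is that for monomial primes $P,Q\subset S$ with $x_n\notin P$ and $x_n\notin Q$ one has $P=P'S$ and $Q=Q'S$, so $P\subset Q$ if and only if $P'\subset Q'$. Hence if $\mathcal{P}$ is pretty clean and $P_{j_k}'\subset P_{j_\ell}'$ with $k<\ell$, then $P_{j_k}\subset P_{j_\ell}$ in $\mathcal{P}$, which forces $P_{j_k}=P_{j_\ell}$ and therefore $P_{j_k}'=P_{j_\ell}'$; this is exactly pretty cleanness of $\mathcal{P}'$. For the clean case I would invoke the equivalent characterization from \cite{HP} that cleanness is the absence of strict containments among the elements of $\Supp(\mathcal{P})$, which by the same inclusion-preserving correspondence transfers verbatim to $\Supp(\mathcal{P}')\subseteq\{P':P\in\Supp(\mathcal{P}),\ x_n\notin P\}$. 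The Stanley-ideal consequence is then immediate from the result of \cite{HP} that pretty clean ideals are Stanley ideals.

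The anticipated difficulty is essentially only bookkeeping: once the inclusion-preserving bijection $P\leftrightarrow P'$ between monomial primes of $S$ not containing $x_n$ and monomial primes of $T$ is in hand, all containment analysis on the new support reduces cleanly to that on the old one, and the substantive work is already packaged inside Proposition~\ref{sumiya}.
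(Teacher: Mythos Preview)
Your proposal is correct and follows exactly the paper's approach: apply Proposition~\ref{sumiya}, delete the steps where $J_{j-1}=J_j$, and observe that the support of the resulting prime filtration is (up to the correspondence $P\leftrightarrow P'$) contained in $\Supp(\mathcal{P})$. The paper's own proof is just a two-sentence sketch of this, and your explicit verification that the inclusion-preserving bijection $P\leftrightarrow P'$ transfers (pretty) cleanness is precisely the detail the paper leaves to the reader.
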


\begin{proof}
We refer to the the hypotheses and notation  of
Proposition~\ref{sumiya}, and assume in addition that the filtration
$\mathcal P$ of $S/I$ is (pretty) clean. The filtration of $J$ given
in Proposition~\ref{sumiya} can be modified to give a prime
filtration of $T/J$ (by omitting for all $i>0$ those  $J_i$ for
which $J_{i-1}=J_i$) whose support is a subset of $\Supp({\mathcal
P})$. From this, all assertions follow immediately.
\end{proof}

Let ${\mathcal F}\: I=I_0\subset I_1\subset I_2\subset \cdots \subset I_r=S$ be a prime filtration  with $I_j/I_{j-1}\iso S/P_j(-a_j)$.  Then
\[
{\mathcal D(F)}: S/I=\Dirsum_{j=1}^ru_iK[Z_i]
\]
is a Stanley decomposition of $S/I$, where $u_i=x^{a_i}$ and $Z_i=
\{x_j :\; x_j\not\in P_i\}$ (see \cite{HP}). Thus if we set $\fdepth {\mathcal F}=\min\{\dim S/P_1,\ldots, \dim S/P_r\}$ and
\[
\fdepth S/I=\max\{\fdepth {\mathcal F}\:   \text{$\mathcal{F}$ is a prime filtration of $S/I$}\},
\]
then  see that
$\fdepth S/I\leq \sdepth S/I$.

As an immediate consequence of Proposition~\ref{sumiya} we obtain

\begin{Corollary}
\label{shehnaz} Let $I\subset S$ be a pretty clean monomial ideal.
Then
\[
\fdepth T/\varphi(I)\geq \fdepth S/I-1.
\]
\end{Corollary}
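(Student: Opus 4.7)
The plan is to take a prime filtration of $S/I$ realizing the maximum in the definition of $\fdepth$, push it through $\varphi$ via Proposition~\ref{sumiya}, and check that the resulting filtration of $T/\varphi(I)$ has minimum dimension at most one less. Concretely, I would fix a prime filtration $\mathcal{P}\: I = I_0 \subset I_1 \subset \cdots \subset I_r = S$ with $I_j/I_{j-1} \iso (S/P_j)(-a_j)$ achieving $\fdepth \mathcal{P} = \fdepth S/I$ (such a filtration exists since there are only finitely many monomial primes to choose from, so the max is attained).

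Proposition~\ref{sumiya} then produces the chain $\varphi(I) = J_0 \subseteq J_1 \subseteq \cdots \subseteq J_r = T$ whose nonzero factors are exactly the $(T/P_j')(-a_j')$ with $x_n \notin P_j$. Deleting the indices where $J_{j-1} = J_j$ yields a genuine prime filtration $\mathcal{P}'$ of $T/\varphi(I)$ whose support is $\{P_j' : x_n \notin P_j\} \subseteq \Supp(\mathcal{P})$. The heart of the argument is an elementary dimension identity: if $P_j = (x_i : i \in A) \subset S$ is a monomial prime with $n \notin A$, then $P_j'$ is generated in $T$ by the same variables, whence
\[
\dim T/P_j' = (n-1) - |A| = \dim S/P_j - 1.
\]
Taking minima over the surviving indices then gives
\[
\fdepth T/\varphi(I) \geq \fdepth \mathcal{P}' = \min_{j:\, x_n \notin P_j}\bigl(\dim S/P_j - 1\bigr) \geq \fdepth \mathcal{P} - 1 = \fdepth S/I - 1.
\]

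The only awkward case I anticipate is when every $P_j$ in the chosen filtration contains $x_n$: then $\mathcal{P}'$ becomes empty, $\varphi(I) = T$, and $T/\varphi(I) = 0$, so the inequality must be read with the convention $\fdepth 0 = \infty$. This forces $x_n^k \in I$ for some $k$, and the pretty clean hypothesis in the statement is presumably included both to accommodate this degeneracy cleanly in the intended applications and because the corollary is naturally paired with Corollary~\ref{nasir}, which genuinely requires pretty cleanness. Apart from that, the entire argument is bookkeeping on top of Proposition~\ref{sumiya}, so I do not expect any other obstacle.
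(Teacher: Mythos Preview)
Your argument is correct and is exactly the ``immediate consequence of Proposition~\ref{sumiya}'' the paper alludes to without spelling out: push an optimal prime filtration through $\varphi$, drop the collapsed factors, and use $\dim T/P_j' = \dim S/P_j - 1$ for the surviving ones. Your observation that the pretty clean hypothesis plays no role in the bound itself is also accurate; the paper's own introduction states the inequality without that assumption, so it appears to be an artifact of context rather than a logical requirement.
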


\section{Localizations and Stanley decompositions}

The purpose of this section is to prove an inequality for the $\sdepth$  similar to that for the $\fdepth$ given in Corollary~\ref{shehnaz} in Section 2. The desired inequality will be a consequence of

\begin{Theorem}
Let ${\mathcal D}:S/I=\bigoplus_{i=1}^ru_iK[Z_i]$ be a Stanley
decomposition of $S/I$ then ${\mathcal
D}':T/\varphi(I)=\bigoplus\limits_{x_n\in
Z_i}\varphi(u_i)K[Z_i\setminus \{x_n\}]$ is a Stanley decomposition
of $T/ \varphi(I).$
\end{Theorem}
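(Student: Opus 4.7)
The plan is to show that $\mathcal{D}'$ gives a $K$-vector space decomposition of $T/\varphi(I)$ into Stanley spaces. The key observation is that for a monomial $m \in T$, one has $m \in \varphi(I)$ if and only if $m x_n^k \in I$ for some (equivalently, all sufficiently large) $k \geq 0$; thus a monomial $m \in T$ represents a basis vector of $T/\varphi(I)$ exactly when the entire ray $\{m x_n^k : k \geq 0\}$ lies in the complement of $I$ inside $S$. Using this dictionary between monomials of $T \setminus \varphi(I)$ and rays in $S \setminus I$, I will verify in turn that (i) each proposed Stanley space $\varphi(u_j) K[Z_j \setminus \{x_n\}]$ lies in $T \setminus \varphi(I)$, (ii) distinct such spaces meet only in $0$, and (iii) every monomial of $T \setminus \varphi(I)$ lies in one of them.

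For (i), if $\varphi(u_j) w$ were in $\varphi(I)$ for some $w \in K[Z_j \setminus \{x_n\}]$ with $x_n \in Z_j$, then $\varphi(u_j) w x_n^k \in I$ for some $k$ at least the $x_n$-exponent $a$ of $u_j$; this rewrites as $u_j \cdot (w x_n^{k - a}) \in I$. Since $x_n \in Z_j$, the factor $w x_n^{k-a}$ lies in $K[Z_j]$, so this monomial belongs to the Stanley space $u_j K[Z_j]$, contradicting $u_j K[Z_j] \cap I = \emptyset$. For (ii), if a monomial $m \in T$ sits in both $\varphi(u_j) K[Z_j \setminus \{x_n\}]$ and $\varphi(u_{j'}) K[Z_{j'} \setminus \{x_n\}]$, then for $k$ larger than the $x_n$-exponents of both $u_j$ and $u_{j'}$, the monomial $m x_n^k$ lies in both $u_j K[Z_j]$ and $u_{j'} K[Z_{j'}]$, and the direct-sum structure of $\mathcal{D}$ forces $j = j'$.

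The main obstacle is step (iii). Given a monomial $m \in T \setminus \varphi(I)$, the infinite ray $\{m x_n^k : k \geq 0\}$ lies entirely in $I^c \subseteq S$, so by the pigeonhole principle some single Stanley space $u_j K[Z_j]$ from $\mathcal{D}$ must contain two distinct ray elements $m x_n^{k_0}$ and $m x_n^{k_1}$ with $k_0 < k_1$. Writing $u_j = \varphi(u_j) x_n^a$, the divisibility $u_j \mid m x_n^{k_0}$ in $S$ forces $\varphi(u_j) \mid m$ in $T$ and $a \leq k_0$, so $m/\varphi(u_j)$ is a genuine monomial in $T$, and both quotients $m x_n^{k_0}/u_j$ and $m x_n^{k_1}/u_j$ belong to $K[Z_j]$. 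Since these two differ by the positive power $x_n^{k_1 - k_0}$, one has $x_n \in Z_j$; moreover the support of $m/\varphi(u_j)$ is contained in $Z_j$ and, as $m/\varphi(u_j) \in T$, is automatically contained in $Z_j \setminus \{x_n\}$. Hence $m \in \varphi(u_j) K[Z_j \setminus \{x_n\}]$, which completes the verification.
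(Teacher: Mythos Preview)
Your proof is correct and follows essentially the same strategy as the paper: lift monomials of $T$ to $S$ by multiplying by suitable powers of $x_n$ and then invoke the direct-sum structure of $\mathcal{D}$. Your pigeonhole argument on the ray $\{m x_n^{k}:k\geq 0\}$ to locate an index $j$ with $x_n\in Z_j$ is in fact tidier than the paper's version of that step, and you also make explicit (your step (i)) that each $\varphi(u_j)K[Z_j\setminus\{x_n\}]$ avoids $\varphi(I)$, a point the paper leaves implicit.
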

\begin{proof}
Firstly we prove that
 $$ \varphi(u_i)K[Z_i\setminus \{x_n\}]\cap \varphi(u_j)K[Z_j\setminus \{x_n\}]=\{0\} $$
 for $ i\neq j $ and $ x_n \in Z_i ,Z_j$. Suppose on the contrary that there exists a monomial
 $u\in T$ such that $$ u \in \varphi(u_i)K[Z_i\setminus \{x_n\}]\cap \varphi(u_j)K[Z_j\setminus \{x_n\}], $$
 that is $$ u=\varphi(u_i)f_i=\varphi(u_j)f_j,$$ for some monomials
 $ f_i\in K[Z_i\setminus \{x_n\}] ,f_j\in K[Z_j\setminus \{x_n\}]$.  It follows that $ux_n^a\in u_iK[Z_i]$ and $ ux_n^a\in
u_jK[Z_j] $ for some $ {a\in \NN} $ sufficiently large. Hence$$
ux_n^a\in u_iK[Z_i]\cap u_jK[Z_j], $$ that is  a contradiction.

Let $ u\in T\setminus  \varphi(I) $ be a monomial. We claim that
there exists $i\in [r] $ such that $u\in\varphi(u_i)K[Z_i\setminus
\{x_n\}]$. Note that $\varphi (u)=u$ and $u\in I^c$ because
otherwise $u\in \varphi (I)$, which is a contradiction. This implies
that there exist $i\in[r]$ such that $u\in u_iK[Z_i].$ Hence
$$\varphi(u)=u\in \varphi(u_i)K[Z_i\setminus \{x_n\}].$$

Remains to show that we may choose $i$ such that $x_n\in Z_i$. If
$x_n\notin Z_i$ then  there exists $j\in [r]$ such that $i \neq j$
and $ t>s =\deg_{x_n} u_i$ such that $ux_n^t\in u_jK[Z_j]$ with $x_n
\in Z_j.$ Indeed, we have   $ux_n^t=u_j g,$ where $g\in K[Z_j]$ is a
monomial. It follows that $x_n^t$ does not divide $u_j$ because
$t>s,$ so $x_n$ divides $g.$ This implies $x_n\in Z_j.$
\end{proof}
\begin{Corollary}\label{main}
\[
\ \sdepth T/\varphi(I)\geq\ \sdepth S/I-1.
\]
\end{Corollary}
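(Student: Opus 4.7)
The plan is to derive this inequality as an immediate consequence of the theorem just established. First I would fix a Stanley decomposition $\mathcal{D}: S/I=\bigoplus_{i=1}^r u_iK[Z_i]$ that realizes the Stanley depth of $S/I$, so that $\sdepth \mathcal{D}=\min_i |Z_i|=\sdepth S/I$. Such a $\mathcal{D}$ exists by the definition of $\sdepth S/I$ as a maximum over Stanley decompositions. Applying the theorem to this $\mathcal{D}$ produces a Stanley decomposition $\mathcal{D}'$ of $T/\varphi(I)$ whose summands are precisely the $\varphi(u_i)K[Z_i\setminus\{x_n\}]$ with $x_n\in Z_i$.

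The next step is to compare $\sdepth \mathcal{D}'$ with $\sdepth \mathcal{D}$. Each summand of $\mathcal{D}'$ has dimension $|Z_i\setminus\{x_n\}|=|Z_i|-1$, and the minimum of $|Z_i|-1$ taken over the subset of indices with $x_n\in Z_i$ is at least $(\min_i |Z_i|)-1$. Consequently $\sdepth \mathcal{D}'\geq \sdepth \mathcal{D}-1=\sdepth S/I-1$. Since, by definition, $\sdepth T/\varphi(I)\geq \sdepth \mathcal{D}'$ for any Stanley decomposition $\mathcal{D}'$ of $T/\varphi(I)$, the desired inequality follows at once.

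There is essentially no obstacle here, since all of the genuine content has already been absorbed into the preceding theorem; the corollary is a bookkeeping statement. The only caveat I would mention is the degenerate case in which no $Z_i$ contains $x_n$, which is equivalent to $\varphi(I)=T$ (so $T/\varphi(I)=0$); in that situation the inequality holds vacuously under the standard convention.
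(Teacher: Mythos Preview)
Your proposal is correct and follows essentially the same approach as the paper: choose a Stanley decomposition $\mathcal{D}$ realizing $\sdepth S/I$, apply the preceding theorem to obtain $\mathcal{D}'$, and compare the two. If anything, you are slightly more careful than the paper, which asserts $\sdepth \mathcal{D}' = \sdepth S/I - 1$ whereas your observation that one is minimizing over a subset of indices yields only $\sdepth \mathcal{D}' \geq \sdepth S/I - 1$; your version is the accurate one, and the inequality is all that is needed.
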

\begin{proof}
In the above theorem,  let ${\mathcal D}$ be a Stanley decomposition
of $S/I$ such that $\sdepth {\mathcal D}= \sdepth S/I$ . Then we
have
$$\sdepth T/\varphi(I)\geq \sdepth {\mathcal D}' = \sdepth S/I-1. $$
\end{proof}
\begin{Example}
{\em Let $I=(xy)\subset S=K[x,y]$ be an ideal, ${\mathcal
D}:S/I=xK[x]\oplus K[y]$ is a Stanley decomposition of $S/I.$ Thus
$\sdepth {\mathcal D}=1.$ After applying the map $\varphi$ defined
by $x\rightarrow 1$, ${\mathcal D}':T/\varphi(I)=K$ is a Stanley
decomposition of $T/\varphi(I)$ and $\sdepth {\mathcal D}'=0$.}
\end{Example}
\begin{Example} {\em
Let $I=(x^2,xy)$ be an ideal of  $S=K[x,y].$ A Stanley decomposition
of $S/I$ is ${\mathcal D}:S/I =  xK\oplus K[y].$ Thus for $\varphi$
given by $y\rightarrow 1$,
 ${\mathcal D}':T/\varphi(I)=K $ is a Stanley decomposition of $T/\varphi(I).$
  Here $\sdepth S/I =0$ and $\sdepth T/\varphi(I)=0$.}
\end{Example}
\begin{Example} {\em
 Let $I=(xyz)\subset S=K[x,y,z]$ be an ideal. Then  ${\mathcal D}\: S/I=K[x,z]\oplus yK[x,y]\oplus
zyK[y,z]$ is a Stanley decomposition of $S/I$ with $\sdepth
{\mathcal D}=2.$ After applying the map $\varphi$  given by
$z\rightarrow 1$, ${\mathcal D}'\: T/\varphi(I)=K[x]\oplus yK[y]$ is
a Stanley decomposition of $T/\varphi(I)$ and $\sdepth  {\mathcal
D}'=1$.}
\end{Example}

The following example shows that the inequality in Corollary~\ref{main} may be strict.
\begin{Example}
{\em Let $I=(xy,xz,xw)\subset S=K[x,y,z,w]$ be the squarefree
monomial ideal. Then $$ S/I=xK[x] \oplus K[y,z]\oplus wK[y,z,w]$$ is a Stanley decomposition of $S/I$. Thus $\sdepth S/I\geq 1$. By using partitions of the characteristic poset of $S/I$ (see \cite{HSY}), one can show that indeed $\sdepth S/I=1$.
After applying $\varphi$ we get
$\varphi(I)=(x)\subset K[x,y,z]$ and $T/\varphi(I)=K[x,y,z]/(x)\cong
K[y,z].$ Hence $\sdepth T/\varphi(I)=2.$ So we get $$\sdepth
T/\varphi(I)> \sdepth S/I.$$}
\end{Example}

We conclude this section by interpreting the inequality in Corollary~\ref{main} for squarefree monomial ideals in terms of simplicial complexes.

\medskip
Let $S=K[x_1,\ldots, x_n]$ be the polynomial ring in $n$ variables
over the field $K$ and $I \subset S $ an ideal  generated by squarefree monomials. Let $\triangle$ be  a simplicial complex on he vertex set  $[n]$
such that $I$ is the Stanley-Reisner ideal $I_{\Delta}$ associated
to $\Delta$ and $K[\Delta]=S/I.$ As above consider $T/\varphi(I).$
\begin{Lemma}\label{link}
$T/\varphi(I)=K[\link_\Delta(\{n\})].$
\begin{proof}
It is enough to show that
$\varphi(I_\Delta)=I_{\link_\Delta(\{n\})}.$ Let $G\subset[n-1]$ be
such that $x^G \in I_{\link_\Delta(\{n\})}.$ This implies that $G
\not \in \link_\Delta(\{n\})$ and so $G\cup\{n\}\not\in \Delta.$
Hence $x^{G\cup \{n\}} \in I_\Delta.$ This implies that $x^G \in
\varphi(I_\Delta).$

A square free monomial of $I_\Delta$ has the form $x^H$ with
$H\subset [n]$ and $H\not\in \Delta.$ If  $n\not\in H$ then
$x^H=\varphi(x^H)\in \varphi(I_\Delta).$ Since $H\not\in \Delta,$ we
get that $H\cup\{n\}\not\in \Delta.$ Then $H \not\in
\link_\Delta(\{n\})$ and so $x^H \in I_{\link_\Delta(\{n\})}.$ If
$n\in H$ then $x^{H\setminus\{n\}}=\varphi(x^H) \in
\varphi(I_\Delta).$ As $({H\setminus\{n\}})\cup \{n\}=H \not\in
\Delta$ we get ${H\setminus\{n\}} \not\in \link_\Delta(\{n\}).$ Thus
$x^{H\setminus\{n\}} \in I_{\link_\Delta(\{n\})}.$
\end{proof}
\end{Lemma}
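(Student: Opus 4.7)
The plan is to reduce the claim $T/\varphi(I_\Delta) = K[\link_\Delta(\{n\})]$ to showing the equality of monomial ideals $\varphi(I_\Delta) = I_{\link_\Delta(\{n\})}$ in $T$, since by the very definition of a Stanley--Reisner ring we have $K[\link_\Delta(\{n\})] = T/I_{\link_\Delta(\{n\})}$. Recalling that $\link_\Delta(\{n\}) = \{F \subseteq [n-1] : F \cup \{n\} \in \Delta\}$ and that for any simplicial complex $\Gamma$ on a vertex set $V$ the ideal $I_\Gamma$ is generated by the squarefree monomials $x^H$ with $H \subseteq V$, $H \notin \Gamma$, the task becomes purely combinatorial: translate non-faces of $\link_\Delta(\{n\})$ into non-faces of $\Delta$ via the operation of adjoining or removing the vertex $n$.

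I would then prove $\varphi(I_\Delta) \subseteq I_{\link_\Delta(\{n\})}$ and the reverse inclusion separately. For the forward inclusion, it suffices to check the inclusion on the squarefree monomial generators $x^H$ of $I_\Delta$, where $H \subseteq [n]$ is a non-face of $\Delta$. Here two cases arise according to whether $n \in H$ or $n \notin H$: in the first case $\varphi(x^H) = x^{H \setminus\{n\}}$ and one observes that $(H\setminus\{n\})\cup\{n\} = H \notin \Delta$, so $H \setminus \{n\} \notin \link_\Delta(\{n\})$; in the second case $\varphi(x^H) = x^H$ and since $H \notin \Delta$ forces $H \cup \{n\} \notin \Delta$, again $H \notin \link_\Delta(\{n\})$. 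In both cases $\varphi(x^H) \in I_{\link_\Delta(\{n\})}$.

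For the reverse inclusion, take a squarefree generator $x^G$ of $I_{\link_\Delta(\{n\})}$ with $G \subseteq [n-1]$ and $G \notin \link_\Delta(\{n\})$. By definition of the link, $G \cup \{n\} \notin \Delta$, hence $x^{G \cup\{n\}} \in I_\Delta$; applying $\varphi$ gives $\varphi(x^{G\cup\{n\}}) = x^G \in \varphi(I_\Delta)$, as required.

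The proof is essentially just bookkeeping; there is no real obstacle. The only point that could trip one up is making sure one works with the right class of generators — namely the squarefree monomial generators of $I_\Delta$ and $I_{\link_\Delta(\{n\})}$ — rather than arbitrary monomials in these ideals, since it is the correspondence $H \leftrightarrow H \cup \{n\}$ between squarefree supports that drives both inclusions.
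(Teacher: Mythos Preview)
Your proof is correct and follows essentially the same approach as the paper: both reduce to the equality $\varphi(I_\Delta)=I_{\link_\Delta(\{n\})}$ and verify the two inclusions on squarefree generators via the same case analysis on whether $n\in H$. The only difference is the order in which the two inclusions are presented.
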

\begin{Corollary}
$$\sdepth K[\link_\Delta(\{n\})]\geq \sdepth K[\Delta]-1.$$
\begin{proof}
The result follows from the above lemma and Corollary 3.2.
\end{proof}
\end{Corollary}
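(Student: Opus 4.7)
The plan is to chain together the two results already established, namely Lemma~\ref{link} (which identifies the coordinate ring of the localized variety with that of the link) and Corollary~\ref{main} (which controls the behaviour of $\sdepth$ under the localization map $\varphi$). There is essentially no new content to prove: the statement is just the translation of Corollary~\ref{main} into the language of Stanley--Reisner theory.

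First I would set $I=I_\Delta$ so that $K[\Delta]=S/I$. Applying Corollary~\ref{main} to this monomial ideal gives
\[
\sdepth T/\varphi(I) \;\geq\; \sdepth S/I - 1 \;=\; \sdepth K[\Delta]-1.
\]
Next I would invoke Lemma~\ref{link}, which identifies $T/\varphi(I_\Delta)$ with $K[\link_\Delta(\{n\})]$ as $\ZZ^{n-1}$-graded $K$-algebras, and in particular as quotients of a polynomial ring by a monomial ideal. Hence $\sdepth T/\varphi(I)=\sdepth K[\link_\Delta(\{n\})]$, and substituting this into the previous inequality yields the desired bound.

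There is no real obstacle; the only thing to check is that the Stanley depth computed on the quotient $T/\varphi(I_\Delta)$ in Corollary~\ref{main} agrees with the Stanley depth computed on $K[\link_\Delta(\{n\})]$, but this is immediate from Lemma~\ref{link} since the two rings are literally equal (not merely isomorphic after a change of variables), so any Stanley decomposition of one is a Stanley decomposition of the other with the same minimal dimension.
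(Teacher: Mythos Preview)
Your proposal is correct and follows exactly the same approach as the paper: combine Lemma~\ref{link} with Corollary~\ref{main} (Corollary~3.2). There is no substantive difference between your argument and the paper's one-line proof.
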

\begin{Corollary}
For any subset $F\subset[n],$
$$\sdepth K[\link_\Delta(F)]\geq \sdepth K[\Delta]-|F|.$$
\begin{proof}
We may assume that $n\in F.$ Apply induction on $|F|,$ the case
$|F|=1$ was done in the previous corollary. Suppose $|F|>1.$ Then by
the same corollary we get $\sdepth (K[\link_\Delta(\{n\})])\geq
\sdepth (K[\Delta])-1.$ Apply induction hypothesis for
$\link_\Delta(\{n\})$ and $F'=F\setminus\{n\}.$ Then

\begin{eqnarray*}
  \sdepth K[\link_\Delta(F)] &=&\sdepth K[\link_{\link_\Delta(\{n\})}(F')] \\
   &\geq&  \sdepth K[\link_\Delta(\{n\})]- |F'| \\
   &\geq & (\sdepth  K[\Delta]- 1)-|F'| \\
  &=& \sdepth K[\Delta] -|F|.
\end{eqnarray*}
\end{proof}
\end{Corollary}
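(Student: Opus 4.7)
The plan is to prove the statement by induction on $|F|$, using the preceding corollary (the $|F|=1$ case, for the link of a single vertex) as the base. Although that corollary was stated for the specific vertex $\{n\}$, the same argument applies to any vertex $i\in[n]$ after relabeling variables, since the roles of the $x_i$ in $S=K[x_1,\ldots,x_n]$ are interchangeable. Hence the base case gives $\sdepth K[\link_\Delta(\{i\})]\geq \sdepth K[\Delta]-1$ for any vertex $i$.

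For the inductive step, I would pick any element $v\in F$; after relabeling I may assume $v=n$. Set $F'=F\setminus\{n\}$. The key combinatorial identity is
\[
\link_\Delta(F)=\link_{\link_\Delta(\{n\})}(F'),
\]
which follows directly from the definition of link: a face $G$ lies in $\link_\Delta(F)$ iff $G\cap F=\emptyset$ and $G\cup F\in\Delta$, which is equivalent to $G\cap F'=\emptyset$, $n\notin G$, and $G\cup F'\in\link_\Delta(\{n\})$. Granting this identity, the base case applied to $\Delta$ and the induction hypothesis applied to the simplicial complex $\link_\Delta(\{n\})$ (which lives on a vertex set of size $\leq n-1$, so the statement is available) combine to give
\begin{eqnarray*}
\sdepth K[\link_\Delta(F)] &=& \sdepth K[\link_{\link_\Delta(\{n\})}(F')] \\
&\geq& \sdepth K[\link_\Delta(\{n\})]-|F'| \\
&\geq& (\sdepth K[\Delta]-1)-|F'| \\
&=& \sdepth K[\Delta]-|F|.
\end{eqnarray*}

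There is no serious obstacle; the entire content lies in the one-step corollary already established, and the present statement is simply its iteration. The only point that needs a moment of care is the identity $\link_\Delta(F)=\link_{\link_\Delta(\{n\})}(F')$, which is standard but worth noting explicitly so that the induction actually reduces to the base case on a strictly smaller simplicial complex. Once that identity is invoked, the chain of inequalities above closes the argument.
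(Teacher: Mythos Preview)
Your proof is correct and follows essentially the same approach as the paper: induction on $|F|$, reducing to the single-vertex case via the identity $\link_\Delta(F)=\link_{\link_\Delta(\{n\})}(F')$, and chaining the resulting inequalities. The only difference is that you spell out the relabeling argument and verify the link identity explicitly, whereas the paper uses both without comment.
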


\vspace{1 pt}

\end{document}